\newtheorem{thm}{Theorem}[section]
\newtheorem{cor}[thm]{Corollary}
\newtheorem{lem}[thm]{Lemma}
\numberwithin{equation}{section}
\begin{document}

\title{The sum of EP elements in a ring with involution}

\author{Huanyin Chen}
\author{Marjan Sheibani$^*$}
\address{
School of Mathematics\\ Hangzhou Normal University\\ Hangzhou, China}
\email{<huanyinchenhz@163.com>}
\address{Farzanegan Campus, Semnan University, Semnan, Iran}
\email{<m.sheibani@semnan.ac.ir>}

\subjclass[2020]{15A09, 16W10, 16U80.} \keywords{core inverse; EP element, additive property; block complex matrix; ring.}

\begin{abstract}  We present a necessary and sufficient condition under which
the sum of two EP elements in a *-ring has core inverse. As an application, we establish the conditions under which a block complex with EP subblocks has core inverse.
\end{abstract}

\thanks{Corresponding author: Marjan Sheibani}

\maketitle

\section{Introduction}

Let $R$ be an associative ring with an involution $*$. Here, $*: R\rightarrow R$ is an operation satisfying $(x+y)^*=x^*+y^*, (xy)^*=y^*x^*$ and $(x^*)^*=x$ for
all $x, y\in R$. A ring $R$ with involution $*$ is called a *-ring.

The core inverse of a complex matrix was first introduced by Baksalary and G. Trenkler in ~\cite{B}. Raki\'c et al. extended the core inverse of a complex to the case of an element in a *-ring (see~\cite{R}). An element $a\in R$ has core inverse if there exists some $x\in R$ such that $$axa=a, xR=aR, Rx=Ra^*.$$
If such $x$ exists, it is unique, and denote it by $a^{\tiny\textcircled{\#}}$. In 2019, Xu obtained a new characterization of core inverse (~see\cite[Theorem 3.1]{XCZ}). They proved that $a\in R$ has core inverse if and only if there exists some $x\in R$ such that  $$xa^2=a, ax^2=x, (ax)^*=ax.$$ In this case, $a^{\tiny\textcircled{\#}}=x$.

An element $a$ in $R$ has group inverse provided that there exists $x\in R$ such that $$xa^2=a, ax^2=x, ax=xa.$$ Such $x$ is unique if exists, denoted by $a^{\#}$, and called the group inverse of $a$. An element $a$ in $R$ has Drazin inverse provided that there exists $x\in R$ such that $$xa^{n+1}=a^n, ax^2=x, ax=xa.$$ Such $x$ is unique if exists, denoted by $a^D$, and called the Drazin inverse of $a$. Evidently, $a\in R$ has Drazin inverse if and only $a^n\in R$ has group inverse for some
natural number $n$. An element $a\in R$ has $(1,3)$-inverse provided that there exists some $x\in R$ such that $a=axa$ and $(ax)^*=ax$. We denote $x$ by $a^{(1,3)}$. We list several characterizations of core inverse in a *-ring.

\begin{thm} (see~\cite[Theorem 3.4]{LC}, \cite[Theorem 2.14]{R} and ~\cite[Theorem 2.6]{XCZ}). Let $R$ be a *-ring, and let $a\in R$. Then the following are equivalent:\end{thm}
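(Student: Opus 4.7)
The plan is to establish a cycle of implications among the standard characterizations of core invertibility recorded in the cited sources. Based on those references, the equivalent conditions are expected to include: (i) the original definition of Raki\'c et al., with some $x\in R$ satisfying $axa=a$, $xR=aR$, and $Rx=Ra^*$; (ii) the Xu-type criterion, with $x\in R$ satisfying $xa^2=a$, $ax^2=x$, and $(ax)^*=ax$; (iii) the splitting that $a$ is simultaneously group invertible and $(1,3)$-invertible, in which case $a^{\tiny\textcircled{\#}}=a^{\#}aa^{(1,3)}$; possibly together with one-sided ideal formulations such as $a\in a^2R$ and $a^*\in Ra$.

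For $\mathrm{(i)}\Rightarrow\mathrm{(ii)}$ I would take $x=a^{\tiny\textcircled{\#}}$. From $xR=aR$ write $x=au$ and $a=xv$; combined with $axa=a$ this yields $ax^2=x$ and $xa^2=a$. From $Rx=Ra^*$ write $x=ra^*$, so that $ax=ara^*$ is manifestly self-adjoint. For $\mathrm{(iii)}\Rightarrow\mathrm{(i)}$ the classical candidate $x:=a^{\#}aa^{(1,3)}$ verifies $axa=a$ directly, while the ideal equalities follow from the mutual expressibility of $a$ and $x$ through $a^{\#}$ (giving $xR=aR$) and from $aa^{(1,3)}$ being a self-adjoint idempotent that pushes $Rx$ into $Ra^*$, with the reverse inclusion by the same idempotent trick.

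The bridge $\mathrm{(ii)}\Rightarrow\mathrm{(iii)}$ is the most delicate step. Starting only from $xa^2=a$, $ax^2=x$, and $(ax)^*=ax$, I must extract both a group inverse and a $(1,3)$-inverse. The natural group-inverse candidate is $g:=x^2a$: the defining relations give $ag=(ax^2)a=xa$ and $ga=x^2a^2=x(xa^2)=xa$, hence $ag=ga$, and the identities $ga^2=a$ and $ag^2=g$ follow in the same spirit. Once $g=a^{\#}$ is in hand, I would derive $axa=a$ by writing $(ax)a^2=a(xa^2)=a^2$ and then $(ax)a=(ax)a^2\cdot g=a^2g=a$, using the standard fact $a^2g=a$ for the group inverse; combined with the already assumed $(ax)^*=ax$, this makes $x$ a $(1,3)$-inverse.

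The principal obstacle is precisely this derivation of $axa=a$ from the three relations of (ii), because one must first manufacture a candidate group inverse from those relations alone before the $(1,3)$-identity becomes accessible; once $axa=a$ is secured, the remaining verifications collapse to routine substitutions that chain the defining equations together, and uniqueness of the various inverses yields the advertised formula $a^{\tiny\textcircled{\#}}=a^{\#}aa^{(1,3)}$.
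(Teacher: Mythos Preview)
The paper does not supply its own proof of this theorem: Theorem~1.1 is stated as a quotation of known results, with the equivalences attributed to \cite[Theorem 3.4]{LC}, \cite[Theorem 2.14]{R}, and \cite[Theorem 2.6]{XCZ}. There is therefore nothing in the paper to compare your argument against; any proof you write is necessarily an independent reconstruction from the cited sources.

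That said, your sketch contains one genuine gap. In the step $\mathrm{(i)}\Rightarrow\mathrm{(ii)}$ you write $x=ra^*$ from $Rx=Ra^*$ and conclude that $ax=ara^*$ is ``manifestly self-adjoint.'' It is not: $(ara^*)^*=ar^*a^*$, and nothing forces $r=r^*$. The correct route uses \emph{both} inclusions of $Rx=Ra^*$ together with $xR=aR$. From $a^*\in Rx$ one gets $a\in x^*R$, while $x\in Ra^*$ gives $x^*\in aR$; hence $aR=x^*R$. Combined with $aR=xR$ and the fact that $ax$ is an idempotent with range $aR$, one shows $(ax)^*$ is also an idempotent with range $aR$ and then argues $(ax)^*=ax$ via the standard projection-uniqueness trick (or, more directly, one first proves $xax=x$ and $axa=a$ and then uses $a^*=sx$, $x=ra^*$ to get $(ax)^*=x^*a^*=x^*sx\in Rx$ and symmetrically $ax\in Rx^*$, forcing equality of the two idempotents). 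Your remaining implications are essentially correct: the candidate $g=x^2a$ does satisfy $ga^2=a$, $ag^2=g$, $ag=ga$, and from $ag=ga$ one gets $a^2g=ga^2=a$, whence $axa=a^2g=a$ as you intended.
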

\begin{enumerate}
\item [(1)] $a$ has core inverse.
\vspace{-.5mm}
\item [(2)] There exists $x\in R$ such that $axa=a, x=xax, xa^2=a, ax^2=x, (ax)^*=ax$.
\vspace{-.5mm}
\item [(4)] There exists some $p^2=p=p^*\in R$ (i.e., $p$ is a projection) such that $pa=0$ and $a+p\in R$ is invertible.
\vspace{-.5mm}
\item [(5)] $a\in R$ has group inverse and $Ra=Ra^*a$.
\vspace{-.5mm}
\item [(6)] $a\in R$ has group inverse and $a\in R$ has $(1,3)$-inverse.\\
In this case, $a^{\tiny\textcircled{\#}}=x$.
\end{enumerate}

The core invertibility in a *-ring is attractive. Many authors have studied such problems from many different views, e.g., ~\cite{BT,K,K2,XS,XCZ,Z2}.

In~\cite[Theorem 4.3]{XCZ}, Xue, Chen and Zhang proved that $a+b\in R$ has core inverse under the conditions $ab=0$ and $a^*b=0$ for two core invertible elements $a$ and $b$ in $R$.

In ~\cite[Theorem 4.1]{ZCX}, Zhou et al. considered the core inverse of $a+b$ under the conditions $a^2a^{\tiny\textcircled{\#}}b^{\tiny\textcircled{\#}}b=baa^{\tiny\textcircled{\#}}, ab^{\tiny\textcircled{\#}}b=aa^{\tiny\textcircled{\#}}b$ in a Dedekind-finite ring in which $2$ is invertible.

Recall that $a\in R$ is EP (i.e., an EP element) provided that $a$ has group inverse $a^{\#}$ and Moore-Penrose inverse $a^{\dag}$ with $a^{\#}=a^{\dag}$(see~\cite{XCB}). In 2019, Xu et al. proved that $a\in R$ is EP if and only if there exists some $x\in R$ such that $$xa^2=a, ax^2=x, (xa)^*=xa.$$
In ~\cite{R}, Raki\'c et al. proved that $a\in R$ is EP if and only if $a\in R$ has core inverse and $a^{\tiny\textcircled{\#}}=a^{\#}$. For more results on EP elements in a ring, we refer ~\cite{W,XCB,Z2} to the reader.

In this paper, we present a new additive result for the core inverse in a *-ring. We give a necessary and sufficient condition under which
the sum of two EP elements has core inverse.

Let $C^{n\times n}$ be a *-ring of $n\times n$ complex matrices, with conjugate transpose as the involution. As an application, we establish perturbed conditions under which a block complex matrix with EP subblocks has core inverse.

Throughout the paper, all *-rings are associative with an identity. Let $a\in R^D$ and $a^{\pi}=1-aa^D$.
Let $p^2=p\in R$, and let $x\in R$. We write $x=pxp+px(1-p)+(1-p)xp+(1-p)x(1-p),$ and induce a Pierce representation given by the matrix
$x=\left(\begin{array}{cc}
pxp&px(1-p)\\
(1-p)xp&(1-p)x(1-p)
\end{array}
\right)_p.$ We use $R^{\#}$ and $R^{\tiny\textcircled{\#}}$ to
denote the sets of all group and core invertible elements in $R$, respectively. ${\Bbb C}^*$ stands for the set of all nonzero complex number and $A^*$ denotes the conjugate transpose $\overline{A}^T$ of the complex matrix $A$.

\section{Key lemmas}

This section is to investigate elementary properties of the core inverse in a *-ring which will be used in the sequel. We begin with

\begin{lem} Let $p\in R$ be an idempotent, $a\in R^{\#}$ and $pap^{\pi}=0$. If $ap^{\pi}\in R^{\#}$, then $(ap^{\pi})(ap^{\pi})^{\#}=(aa^{\#})p^{\pi}.$\end{lem}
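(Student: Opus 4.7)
The plan is to work in the Pierce decomposition of $R$ relative to $p$. Since $p^{2}=p$ we have $p^{\pi}=1-p$, and the hypothesis $pap^{\pi}=0$ says precisely that $a$ is block lower-triangular: with $x=pap$, $y=p^{\pi}ap$, $z=p^{\pi}ap^{\pi}$,
\[
a=\left(\begin{array}{cc} x & 0 \\ y & z \end{array}\right)_{p}.
\]
A short computation gives $b:=ap^{\pi}=p^{\pi}ap^{\pi}=z$, so $b$ lies in the corner ring $p^{\pi}Rp^{\pi}$; uniqueness of the group inverse then forces $b^{\#}\in p^{\pi}Rp^{\pi}$ as well, and writing $z^{\#}$ for the group inverse of $z$ in this corner, $bb^{\#}=\left(\begin{smallmatrix}0 & 0\\ 0 & zz^{\#}\end{smallmatrix}\right)_{p}$.

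Next I would write $a^{\#}=\left(\begin{smallmatrix}\alpha & \beta\\ \gamma & \delta\end{smallmatrix}\right)_{p}$ and expand the defining relations $aa^{\#}=a^{\#}a$ and $aa^{\#}a=a$ block-by-block. Of the resulting system only four identities are actually needed:
\[
x\beta=\beta z,\quad x\beta z=0,\quad \delta z^{2}=z,\quad y\beta+z\delta=\delta z,
\]
read off from the $(1,2)$- and $(2,2)$-entries of $aa^{\#}=a^{\#}a$ and the $(1,2)$- and $(2,2)$-entries of $aa^{\#}a=a$ (the third after the fourth is substituted in).

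The main obstacle---and the only step that uses the hypothesis $ap^{\pi}\in R^{\#}$ in an essential way---is to promote the weak consequence $\beta z^{2}=0$ (immediate by combining the first two identities) into $\beta z=0$. Group invertibility of $z$ is precisely what does this job: since $z^{2}z^{\#}=z$, one has $\beta z=\beta z^{2}z^{\#}=0$, whence $x\beta=\beta z=0$. Applying the same maneuver to $\delta z^{2}=z$ (multiply on the right by $z^{\#}$) yields $\delta z=zz^{\#}$, and then the fourth identity gives $y\beta+z\delta=zz^{\#}$.

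Combining these two conclusions with the Pierce form
\[
aa^{\#}=\left(\begin{array}{cc}x\alpha & x\beta\\ y\alpha+z\gamma & y\beta+z\delta\end{array}\right)_{p},
\]
right-multiplication by $p^{\pi}$ kills the first column and reduces the second column using $x\beta=0$ and $y\beta+z\delta=zz^{\#}$, giving
\[
(aa^{\#})p^{\pi}=\left(\begin{array}{cc}0 & 0\\ 0 & zz^{\#}\end{array}\right)_{p}=bb^{\#},
\]
which is the required identity.
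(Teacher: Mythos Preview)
Your argument is correct: the four block identities you extract from $aa^{\#}=a^{\#}a$ and $aa^{\#}a=a$ are the right ones, and the key cancellation step $\beta z^{2}=0\Rightarrow\beta z=0$ via $z^{2}z^{\#}=z$ is exactly where the hypothesis $ap^{\pi}\in R^{\#}$ enters. However, the paper's proof is considerably shorter and avoids the Pierce decomposition altogether. The observation $pap^{\pi}=0$ gives $ap^{\pi}=p^{\pi}ap^{\pi}$, whence $(ap^{\pi})^{2}=a^{2}p^{\pi}$; so $(a^{\#}p^{\pi})(ap^{\pi})^{2}=a^{\#}a^{2}p^{\pi}=ap^{\pi}$, and multiplying this identity on the right by $(ap^{\pi})^{\#}$ and using $(ap^{\pi})^{2}(ap^{\pi})^{\#}=ap^{\pi}$ immediately yields $(ap^{\pi})(ap^{\pi})^{\#}=(a^{\#}p^{\pi})(ap^{\pi})=a^{\#}ap^{\pi}$. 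Your route has the virtue of making visible the block structure of $a^{\#}$ (in particular that $pa^{\#}p^{\pi}\cdot ap^{\pi}=0$), which could be useful elsewhere, but for this lemma the paper's three-line computation is both faster and more transparent.
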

\begin{proof} Since $pap^{\pi}=0$, we see that $$\begin{array}{lll}
ap^{\pi}-(a^{\#}p^{\pi})(ap^{\pi})^2&=&ap^{\pi}-(a^{\#}a^2)p^{\pi}\\
&=&(a-a^{\#}a^2)p^{\pi}=0.
\end{array}$$ Hence, $ap^{\pi}=(a^{\#}p^{\pi})(ap^{\pi})^2$. It follows that
$$\begin{array}{lll}
(ap^{\pi})(ap^{\pi})^{\#}&=&(a^{\#}p^{\pi})(ap^{\pi})^2(ap^{\pi})^{\#}\\
&=&(a^{\#}p^{\pi})(ap^{\pi})=(aa^{\#})p^{\pi},
\end{array}$$ as required.\end{proof}

\begin{lem} Let $p\in R$ be a projection, $x\in R$ and $x=\left(
\begin{array}{cc}
a&0\\
b&d
\end{array}
\right)_p$. Assume that $a\in R^{\#}$. Then $x\in R^{\#}$ if and only if $d\in R^{\#}$ and $d^{\pi}ba^{\pi}=0$. In this case,
$$x^{\#}=\left(
\begin{array}{cc}
a^{\#}&0\\
z&d^{\#}
\end{array}
\right)_p,$$ where
$$z=(d^{\#})^2ba^{\pi}+d^{\pi}b(a^{\#})^2-d^{\#}ba^{\#}.$$
\end{lem}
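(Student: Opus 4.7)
The plan is to prove the two implications separately and deduce the formula for $x^{\#}$ from the sufficiency direction together with the uniqueness of the group inverse.

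For sufficiency, assume $a, d \in R^{\#}$ and $d^{\pi} b a^{\pi} = 0$. I set
$$y = \left(\begin{array}{cc} a^{\#} & 0 \\ z & d^{\#} \end{array}\right)_p$$
with the stated $z$, and verify the three defining identities of the group inverse by direct block multiplication. The diagonal blocks of $xy$ and $yx$ are $aa^{\#}$ and $dd^{\#}$ in both cases, so commutation reduces to the $(2,1)$-block; using $dd^{\pi} = d^{\pi}d = 0$ and the analogous identities for $a$, a short computation shows that both $(2,1)$-blocks collapse to $d^{\pi} b a^{\#} + d^{\#} b a^{\pi}$. For $xyx = x$ the only non-trivial check is again the $(2,1)$-block, which reduces to $d^{\pi} b aa^{\#} + dd^{\#} b = b$; this is equivalent to the hypothesis $d^{\pi} b a^{\pi} = 0$. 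Finally $yxy = y$ follows from the same type of computation once one invokes $a^{\pi} a^{\#} = d^{\pi} d^{\#} = 0$ to cancel the extra terms and recover $z$.

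For necessity, suppose $x \in R^{\#}$ with group inverse $y = x^{\#}$, and write $y = \left(\begin{array}{cc} \alpha & \beta \\ \gamma & \delta \end{array}\right)_p$. Extracting the $(1,2)$-block of $xy = yx$ gives $a\beta = \beta d$, and the $(1,2)$-block of $xyx = x$ gives $a\beta d = 0$; combining these yields $a^2 \beta = 0$. Since $a \in R^{\#}$, left multiplication by $a^{\#}$ (using $a^{\#} a^2 = a$) gives $a\beta = 0$, and hence $\beta d = 0$ as well. The crucial step is to upgrade $a\beta = 0$ to $\beta = 0$: from the identity $y = xy^2$ we have $y \in xR$, so in block form $\beta = a r_{12}$ where $r_{12}$ is the $(1,2)$-block of $y^2$; then $a\beta = a^2 r_{12} = 0$, and a final left multiplication by $a^{\#}$ gives $\beta = a r_{12} = 0$.

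Once $\beta = 0$ is established, the remaining block equations simplify dramatically. The identities extracted from $xy = yx$, $xyx = x$, and $yxy = y$ in the bottom-right block give $d\delta = \delta d$, $d\delta d = d$, and $\delta d \delta = \delta$, forcing $\delta = d^{\#}$ and in particular $d \in R^{\#}$; the top-left block analogously yields $a\alpha = \alpha a$, $a\alpha a = a$, $\alpha a \alpha = \alpha$, hence $\alpha = a^{\#}$. For the compatibility condition, the $(2,1)$-block of $xyx = x$ now reads $baa^{\#} + d\gamma a + dd^{\#} b = b$, which rearranges to $d\gamma a = b a^{\pi} - dd^{\#} b$; left multiplication by $d^{\pi}$ annihilates the left-hand side (since $d^{\pi} d = 0$) and the term $dd^{\#} b$ on the right (since $d^{\pi} dd^{\#} = 0$), leaving $d^{\pi} b a^{\pi} = 0$. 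The explicit formula for $z = \gamma$ is then recovered from the sufficiency direction combined with the uniqueness of $x^{\#}$.

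The main obstacle is the necessity direction, specifically the step upgrading $a\beta = 0$ to $\beta = 0$. A priori $a\beta = 0$ does not force $\beta = 0$ in a general ring, and without this vanishing one cannot conclude $\delta d = d\delta$ (which is equivalent to $b\beta = 0$), so the clean identification $\delta = d^{\#}$ would break down. The trick is to exploit the containment $y \in xR$ coming from the general group-inverse identity $y = xy^2$, which places $\beta$ in $aR$ and lets $a^{\#}$ do the cancellation.
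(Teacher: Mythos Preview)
Your argument is correct. The sufficiency check is a routine (and accurate) block computation, and in the necessity direction the decisive step---using $y=xy^{2}$ to place $\beta$ in $aR$ and then cancel via $a^{\#}a^{2}=a$---is a clean way to force $\beta=0$; after that, the corner identities indeed collapse to the group-inverse equations for $a$ and $d$, and the $(2,1)$-block of $xyx=x$ yields $d^{\pi}ba^{\pi}=0$ exactly as you say.

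The paper, however, does not argue this way at all: its proof is a one-line appeal to the known upper-triangular result \cite[Theorem~2.1]{MD}, transported to the lower-triangular case by passing to the opposite ring. Your route is genuinely different in that it is self-contained and makes the mechanism visible---in particular the $y\in xR$ trick for killing the off-diagonal block, which the cited reference hides inside its own proof. The trade-off is length for transparency: the paper's approach is shorter and leverages existing literature, while yours would stand on its own without the external citation and exposes precisely where the hypothesis $a\in R^{\#}$ is used (namely, to cancel $a$ on the left).
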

\begin{proof} This is the dual of ~\cite[Theorem 2.1]{MD}. We easily obtained it by applying ~\cite[Theorem 2.1]{MD} to the opposite ring $M_2(R^{op})$.\end{proof}

\begin{lem} Let $p\in R$ be a projection, $a\in R$ and $pap^{\pi}=0$. If $pap, ap^{\pi}\in R^{\tiny\textcircled{\#}}$ and $(ap^{\pi})^{\pi}p^{\pi}ap=0,$ then $a\in R^{\tiny\textcircled{\#}}$ and $pa^{\tiny\textcircled{\#}}p^{\pi}=0$.\end{lem}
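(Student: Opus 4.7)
The plan is to exhibit an explicit core inverse for $a$ satisfying the characterization $xa^{2}=a$, $ax^{2}=x$, $(ax)^{*}=ax$ from Theorem 1.1(2). Since $pap^{\pi}=0$, the Peirce decomposition of $a$ relative to $p$ is lower triangular:
$$
a=\left(\begin{array}{cc}\alpha & 0\\ \beta & \delta\end{array}\right)_{p},\qquad \alpha=pap,\ \beta=p^{\pi}ap,\ \delta=p^{\pi}ap^{\pi}=ap^{\pi},
$$
so by hypothesis $\alpha,\delta\in R^{\tiny\textcircled{\#}}$.

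Before the main construction I would record two corner facts. First, $\alpha^{\tiny\textcircled{\#}}\in pRp$ and $\delta^{\tiny\textcircled{\#}}\in p^{\pi}Rp^{\pi}$: from $\alpha y^{2}=y$ with $y=\alpha^{\tiny\textcircled{\#}}$ one reads $y\in pR$, and then $(\alpha y)^{*}=\alpha y\in pR\cap Rp=pRp$ combined with $y=yay$ forces $y\in Rp$ as well; the argument for $\delta^{\tiny\textcircled{\#}}$ is identical with $p$ replaced by $p^{\pi}$. Second, the hypothesis $(ap^{\pi})^{\pi}p^{\pi}ap=0$ reads $\delta\delta^{\#}\beta=\beta$ in terms of the group inverse $\delta^{\#}$ of $\delta$; combining this with $\delta\delta^{\tiny\textcircled{\#}}\delta=\delta$ yields the promoted identity
$$
\delta\delta^{\tiny\textcircled{\#}}\beta=\delta\delta^{\tiny\textcircled{\#}}\delta\delta^{\#}\beta=\delta\delta^{\#}\beta=\beta,
$$
which is what will allow the diagonal core inverses to interact correctly with the off-diagonal block $\beta$.

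Then I would propose
$$
x=\alpha^{\tiny\textcircled{\#}}-\delta^{\tiny\textcircled{\#}}\beta\alpha^{\tiny\textcircled{\#}}+\delta^{\tiny\textcircled{\#}}
$$
and verify the three defining identities. All cross-corner products (e.g.\ $\alpha\delta^{\tiny\textcircled{\#}}$, $\delta\alpha^{\tiny\textcircled{\#}}$, $\beta\delta^{\tiny\textcircled{\#}}$, $\alpha^{\tiny\textcircled{\#}}\beta$) vanish by Peirce orthogonality, so only a handful of terms survive. Using the standard relations $\alpha^{\tiny\textcircled{\#}}\alpha^{2}=\alpha$, $\delta^{\tiny\textcircled{\#}}\delta^{2}=\delta$, $\alpha(\alpha^{\tiny\textcircled{\#}})^{2}=\alpha^{\tiny\textcircled{\#}}$, $\delta(\delta^{\tiny\textcircled{\#}})^{2}=\delta^{\tiny\textcircled{\#}}$, together with the promoted identity $\delta\delta^{\tiny\textcircled{\#}}\beta=\beta$, a direct computation collapses $ax$ to $\alpha\alpha^{\tiny\textcircled{\#}}+\delta\delta^{\tiny\textcircled{\#}}$, a sum of two projections sitting in opposite Peirce corners and hence itself a projection, so $(ax)^{*}=ax$. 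The identities $xa^{2}=a$ and $ax^{2}=x$ then follow by the same kind of term-by-term multiplication, with $\delta\delta^{\tiny\textcircled{\#}}\beta=\beta$ again playing the decisive role in the off-diagonal piece.

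Finally, $pa^{\tiny\textcircled{\#}}p^{\pi}=pxp^{\pi}$ vanishes because each summand of $x$ lies in a corner annihilated by $p$ on the left or by $p^{\pi}$ on the right: $\alpha^{\tiny\textcircled{\#}}\in pRp$, $\delta^{\tiny\textcircled{\#}}\beta\alpha^{\tiny\textcircled{\#}}\in p^{\pi}Rp$, and $\delta^{\tiny\textcircled{\#}}\in p^{\pi}Rp^{\pi}$. The main subtlety I anticipate is keeping straight the distinction between the group inverse $\delta^{\#}$ (which encodes the hypothesis $(ap^{\pi})^{\pi}p^{\pi}ap=0$) and the core inverse $\delta^{\tiny\textcircled{\#}}$ (which drives the construction); the one-line bridge $\delta\delta^{\tiny\textcircled{\#}}\beta=\beta$ displayed above is the only nontrivial input beyond Peirce bookkeeping, so once it is in place the rest reduces to careful but routine verification.
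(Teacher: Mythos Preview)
Your proof is correct and proposes the same candidate
\[
x=\left(\begin{array}{cc}\alpha^{\tiny\textcircled{\#}}&0\\ -\delta^{\tiny\textcircled{\#}}\beta\alpha^{\tiny\textcircled{\#}}&\delta^{\tiny\textcircled{\#}}\end{array}\right)_{p}
\]
that the paper uses, but your verification strategy is somewhat different and in fact more economical. The paper first invokes Lemma~2.2 to show $a\in R^{\#}$ and obtain an explicit $a^{\#}$, then checks only $axa=a$ and $(ax)^{*}=ax$ to conclude $a\in R^{(1,3)}$, appeals to Theorem~1.1(6) for core invertibility, and finally computes $a^{\tiny\textcircled{\#}}=a^{\#}ax$ to read off the lower-triangular shape. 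You instead verify the three Xu equations $xa^{2}=a$, $ax^{2}=x$, $(ax)^{*}=ax$ directly, which simultaneously identifies $x$ as $a^{\tiny\textcircled{\#}}$ and makes $pa^{\tiny\textcircled{\#}}p^{\pi}=0$ immediate; this bypasses Lemma~2.2 entirely. Your explicit bridge $\delta\delta^{\tiny\textcircled{\#}}\beta=\delta\delta^{\tiny\textcircled{\#}}\delta\delta^{\#}\beta=\delta\delta^{\#}\beta=\beta$ is also a point the paper uses tacitly when it rewrites the $(2,1)$ entry of $ax$ as $(ap^{\pi})^{\pi}p^{\pi}ap(pap)^{\tiny\textcircled{\#}}$; making it explicit is a genuine improvement. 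One small remark: in checking $xa^{2}=a$ the $(2,1)$ block reduces to $\delta^{\tiny\textcircled{\#}}\delta\beta$, and you should record the companion identity $\delta^{\tiny\textcircled{\#}}\delta\beta=\delta^{\tiny\textcircled{\#}}\delta^{2}\delta^{\#}\beta=\delta\delta^{\#}\beta=\beta$ alongside the one you displayed (and the ``$y=yay$'' in your corner argument should read $y=y\alpha y$).
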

\begin{proof}  Since $pap^{\pi}=0$, we have $a=\left(\begin{array}{cc}
pap&0\\
p^{\pi}ap&ap^{\pi}
\end{array}
\right)_p.$ By virtue of Theorem 1.1, $pap, ap^{\pi}\in R^{\#}$. As $(ap^{\pi})^{\pi}p^{\pi}apap=0,$ it follows by Lemma 2.2 that
$a\in R^{\#}$. Moreover, we have
$$a^{\#}=\left(
\begin{array}{cc}
(pap)^{\#}&0\\
z&(ap^{\pi})^{\#}
\end{array}
\right)_p,$$ where $$z=[(ap^{\pi})^{\#}]^2p^{\pi}ap(pap)^{\pi}+(ap^{\pi})^{\pi}p^{\pi}ap[(pap)^{\#}]^2-(ap^{\pi})^{\#}p^{\pi}ap(pap)^{\#}.$$
Since $p\in R$ is a projection, we have $$(pap)^{\tiny\textcircled{\#}}=[p\cdot (pap)\cdot p]^{\tiny\textcircled{\#}}=p(pap)^{\tiny\textcircled{\#}}p\subseteq pRp.$$ Similarly, $$(ap^{\pi})^{\tiny\textcircled{\#}}=(p^{\pi}ap^{\pi})^{\tiny\textcircled{\#}}\subseteq p^{\pi}Rp^{\pi}.$$
Set $$x=\left(
\begin{array}{cc}
(pap)^{\tiny\textcircled{\#}}&0\\
-(ap^{\pi})^{\tiny\textcircled{\#}}(p^{\pi}ap)(pap)^{\tiny\textcircled{\#}}&(ap^{\pi})^{\tiny\textcircled{\#}}
\end{array}
\right)_p.$$ Then we have
$$\begin{array}{rll}
ax&=&\left(\begin{array}{cc}
pap&0\\
p^{\pi}ap&ap^{\pi}
\end{array}
\right)_p\left(
\begin{array}{cc}
(pap)^{\tiny\textcircled{\#}}&0\\
-(ap^{\pi})^{\tiny\textcircled{\#}}(p^{\pi}ap)(pap)^{\tiny\textcircled{\#}}&(ap^{\pi})^{\tiny\textcircled{\#}}
\end{array}
\right)_p\\
&=&\left(
\begin{array}{cc}
(pap)(pap)^{\tiny\textcircled{\#}}&0\\
(ap^{\pi})^{\pi}p^{\pi}ap(pap)^{\tiny\textcircled{\#}}&(ap^{\pi})(ap^{\pi})^{\tiny\textcircled{\#}}
\end{array}
\right)_p\\
&=&\left(
\begin{array}{cc}
(pap)(pap)^{\tiny\textcircled{\#}}&0\\
0&(ap^{\pi})(ap^{\pi})^{\tiny\textcircled{\#}}
\end{array}
\right)_p.
\end{array}$$ Hence $$(ax)^*=\left(
\begin{array}{cc}
(pap)(pap)^{\tiny\textcircled{\#}}&0\\
0&(ap^{\pi})(ap^{\pi})^{\tiny\textcircled{\#}}
\end{array}
\right)_p^*=ax.$$
We further verify that $$\begin{array}{rll}
axa&=&\left(
\begin{array}{cc}
(pap)(pap)^{\tiny\textcircled{\#}}&0\\
0&(ap^{\pi})(ap^{\pi})^{\tiny\textcircled{\#}}
\end{array}
\right)_p\left(\begin{array}{cc}
pap&0\\
p^{\pi}ap&ap^{\pi}
\end{array}
\right)_p\\
&=&a,
\end{array}$$ and so $a\in R^{(1,3)}$. According to Theorem 1.1, $a$ has core inverse.

Moreover, we have $$\begin{array}{rl}
&a^{\tiny\textcircled{\#}}=a^{\#}ax=\left(
\begin{array}{cc}
(pap)^{\#}&0\\
z&(ap^{\pi})^{\#}
\end{array}
\right)_p\left(
\begin{array}{cc}
(pap)(pap)^{\tiny\textcircled{\#}}&0\\
0&(ap^{\pi})(ap^{\pi})^{\tiny\textcircled{\#}}
\end{array}
\right)_p\\
=&\left(
\begin{array}{cc}
(pap)^{\#}&0\\
z(pap)(pap)^{\tiny\textcircled{\#}}&(ap^{\pi})^{\#}
\end{array}
\right)_p.
\end{array}$$ Therefore $pa^{\tiny\textcircled{\#}}p^{\pi}=0$, as asserted.\end{proof}

\begin{lem} Let $a\in R^{EP}, b\in R^{\tiny\textcircled{\#}}$. If $a^{\pi}b\in R^D$ and $aba^{\pi}=0$, then the following are equivalent:\end{lem}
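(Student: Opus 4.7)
The plan is to base everything on the projection $p := aa^{\tiny\textcircled{\#}}$, which because $a$ is EP coincides with $aa^{\#} = a^{\#}a$, so in particular $a^{\pi}=p^{\pi}$ and $a=pap$. First I would convert the coupling hypothesis $aba^{\pi}=0$ into a much stronger orthogonality relation: left-multiplying by $a^{\#}$ yields $pba^{\pi}=0$, i.e.\ $pbp^{\pi}=0$. Combined with $a=pap$, this puts $a+b$ in the lower-triangular Pierce form
\[
a+b=\left(\begin{array}{cc} a+pbp & 0 \\ p^{\pi}bp & p^{\pi}bp^{\pi} \end{array}\right)_p.
\]

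Next I would analyse the two diagonal blocks separately. The upper-left block $a+pbp$ sits inside the corner ring $pRp$ and is a perturbation of the EP element $a$ by the EP/core corner $pbp$. The lower-right block equals $p^{\pi}bp^{\pi}=p^{\pi}(a^{\pi}b)p^{\pi}$, and here I would use the assumptions that $b\in R^{\tiny\textcircled{\#}}$ and $a^{\pi}b\in R^{D}$ to identify its Drazin/core-invertibility behaviour inside the corner $p^{\pi}Rp^{\pi}$. These block identifications are what give the hypotheses of the lemma a uniform meaning relative to $p$.

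With these blocks in hand, I would invoke Lemma 2.3: it characterises the core invertibility of a $2\times 2$ lower-triangular matrix in Pierce form by the core invertibility of its diagonal entries plus a compatibility condition on the off-diagonal entry. Applied to the display above, this should translate ``$a+b\in R^{\tiny\textcircled{\#}}$'' into an equivalent statement built from $(a+pbp)^{\tiny\textcircled{\#}}$, $(p^{\pi}bp^{\pi})^{\tiny\textcircled{\#}}$ and a compatibility condition of the form $(p^{\pi}bp^{\pi})^{\pi}(p^{\pi}bp)(a+pbp)^{\pi}=0$. From the same lemma one then reads off a block formula for $(a+b)^{\tiny\textcircled{\#}}$.

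The hard step will be the last one, namely matching this abstract $2\times 2$ compatibility condition against the clean ring-theoretic condition(s) actually listed in the statement. Concretely, I expect to need: (i) to re-express $(p^{\pi}bp^{\pi})^{\pi}$ in terms of $(a^{\pi}b)^{D}$ (using $a^{\pi}=p^{\pi}$ and the corner-ring identity $(p^{\pi}bp^{\pi})^{\tiny\textcircled{\#}}\subseteq p^{\pi}Rp^{\pi}$ used in the proof of Lemma 2.3); and (ii) to show that $a+pbp$ inherits core invertibility from the EP element $a$ via a small Jacobson- or Cline-type argument inside $pRp$. Once these two identifications are made, the equivalence and the explicit expression for $(a+b)^{\tiny\textcircled{\#}}$ follow by assembling the pieces and reading Lemma 2.3 forward and backward.
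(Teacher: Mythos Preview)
Your setup matches the paper: take $p=aa^{\#}=aa^{\tiny\textcircled{\#}}$, use $aba^{\pi}=0$ to get $pbp^{\pi}=0$, and write $a+b$ in lower-triangular Pierce form with diagonal blocks $a(1+a^{\#}b)=a+pbp$ and $ba^{\pi}=p^{\pi}bp^{\pi}$. For the direction $(2)\Rightarrow(1)$ your plan is exactly what the paper does: verify the compatibility condition and apply Lemma~2.3.

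There are, however, two genuine gaps in your plan for $(1)\Rightarrow(2)$. First, Lemma~2.3 is \emph{not} a characterisation; it is only a sufficient condition. So you cannot ``read Lemma~2.3 backward'' to deduce core invertibility of the diagonal blocks and the compatibility relation from $a+b\in R^{\tiny\textcircled{\#}}$ alone. Second, you have overlooked that condition~(1) in the lemma is not merely ``$a+b\in R^{\tiny\textcircled{\#}}$'' but also includes the extra hypothesis $a(a+b)^{\tiny\textcircled{\#}}a^{\pi}=0$. This side condition is precisely what the paper uses in place of a converse to Lemma~2.3: it forces $(a+b)^{\tiny\textcircled{\#}}$ itself to be lower triangular in the same Pierce decomposition, after which the defining equations $(xx^{\tiny\textcircled{\#}})^*=xx^{\tiny\textcircled{\#}}$, $x(x^{\tiny\textcircled{\#}})^2=x^{\tiny\textcircled{\#}}$, $x^{\tiny\textcircled{\#}}x^2=x$, $xx^{\tiny\textcircled{\#}}x=x$ can be read off block-by-block to give $a(1+a^{\#}b)\in R^{\tiny\textcircled{\#}}$ and the relation $b_3=b_4x_4b_3$, which yields $b^{\pi}a^{\pi}b=0$.

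A smaller point: the compatibility condition is matched to $b^{\pi}a^{\pi}b=0$ via Lemma~2.1, which (using $b\in R^{\#}$ and $pbp^{\pi}=0$) gives $(ba^{\pi})(ba^{\pi})^{\#}=bb^{\#}a^{\pi}$ and hence $(ba^{\pi})^{\pi}=b^{\pi}a^{\pi}$ directly. Your route through $(a^{\pi}b)^{D}$ and a ``Jacobson- or Cline-type'' argument for $a+pbp$ is unnecessary and, in the latter case, would not by itself produce the core inverse of the upper-left block; that block's core invertibility is simply part of hypothesis~(2) in one direction and is extracted from the triangular form of $(a+b)^{\tiny\textcircled{\#}}$ in the other.
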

\begin{enumerate}
\item [(1)] $a+b\in R^{\tiny\textcircled{\#}}$ and $a(a+b)^{\tiny\textcircled{\#}}a^{\pi}=0$.
\vspace{-.5mm}
\item [(2)] $a(1+a^{\#}b)\in R^{\tiny\textcircled{\#}}$ and $b^{\pi}a^{\pi}b=0$.
\end{enumerate}
\begin{proof} Let $p=aa^{\#}$. Since $a\in R^{EP}$, we see that $p=aa^{\tiny\textcircled{\#}}$ is a projection.
Clearly, $pbp^{\pi}=a^{\#}(aba^{\pi})=0$. Then we have $$a=\left(
\begin{array}{cc}
a_1&0\\
0&0
\end{array}
\right)_p, b=\left(
\begin{array}{cc}
b_1&0\\
b_3&b_4
\end{array}
\right)_p.$$
Thus, $$a+b=\left(
\begin{array}{cc}
a_1+b_1&0\\
b_3&b_4
\end{array}
\right)_p.$$ Here, $$\begin{array}{rll}
a_1+b_1&=&aa^{\#}(a+b)aa^{\#}=aa^{\#}(a+b),\\
b_4&=&a^{\pi}(a+b)a^{\pi}=ba^{\pi}.
\end{array}$$
In view of Lemma 2.1, we directly compute that $$\begin{array}{rll}
b_4^{\#}&=&(ba^{\pi})^{\#}=b^{\#}a^{\pi},\\
b_4^{\pi}&=&a^{\pi}-b^{\#}a^{\pi}ba^{\pi}=a^{\pi}-b^{\#}ba^{\pi}=b^{\pi}a^{\pi}.
\end{array}$$

$(1)\Rightarrow (2)$ Set $x=a+b$ and $w=a_1+b_1$. Since $aa^{\#}(a+b)^{\tiny\textcircled{\#}}a^{\pi}=0$, we may write
$x^{\tiny\textcircled{\#}}=\left(
\begin{array}{cc}
x_1&0\\
x_3&x_4
\end{array}
\right)_p.$ In light of Theorem 1.1, we have
$$(xx^{\tiny\textcircled{\#}})^*=xx^{\tiny\textcircled{\#}}, x(x^{\tiny\textcircled{\#}})^2=x^{\tiny\textcircled{\#}}, x^{\tiny\textcircled{\#}}x^2=x~\mbox{and}~xx^{\tiny\textcircled{\#}}x=x.$$
Hence, $(wx_1)^*=wx_1, wx_1^2=x_1, x_1w^2=w.$ Therefore $w=aa^{\#}(a+b)=a(1+a^{\#}b)\in R^{\tiny\textcircled{\#}}$.

Since $xx^{\tiny\textcircled{\#}}x=x$, we have $$\begin{array}{rll}
\left(
\begin{array}{cc}
w&0\\
b_3&b_4
\end{array}
\right)_p&=&\left(
\begin{array}{cc}
w&0\\
b_3&b_4
\end{array}
\right)_p\left(
\begin{array}{cc}
x_1&0\\
x_3&x_4
\end{array}
\right)_p\left(
\begin{array}{cc}
w&0\\
b_3&b_4
\end{array}
\right)_p\\
&=&\left(
\begin{array}{cc}
xx_1w&0\\
(b_3x_1+b_4x_3)x+b_4x_4b_3&b_4x_4b_4
\end{array}
\right)_p.
\end{array}$$ Furthermore, we have
$$\begin{array}{rll}
\left(
\begin{array}{cc}
xx_1&0\\
b_3x_1+b_4x_3&b_4x_4
\end{array}
\right)_p&=&\left(
\begin{array}{cc}
w&0\\
b_3&b_4
\end{array}
\right)_p\left(
\begin{array}{cc}
x_1&0\\
x_3&x_4
\end{array}
\right)_p\\
&=&xx^{\tiny\textcircled{\#}}=(xx^{\tiny\textcircled{\#}})^*\\
&=&\left(\begin{array}{cc}
(xx_1)^*&(b_3x_1+b_4x_3)^*\\
0&(b_4x_4)^*
\end{array}
\right)_p.
\end{array}$$
Thus $$b_3=(b_3x_1+b_4x_3)w+b_4x_4b_3, b_3x_1+b_4x_3=0.$$
We infer that $b_3=b_4x_4b_3$, and so $(p^{\pi}-bb^{\#}p^{\pi})baa^{\#}=0$. This implies that
$b^{\pi}a^{\pi}baa^{\#}=0$. On the other hand, $b^{\pi}a^{\pi}ba^{\pi}=b^{\pi}ba^{\pi}-b^{\pi}a^{\#}(aba^{\pi})=0$. Therefore $b^{\pi}a^{\pi}b=b^{\pi}a^{\pi}b(aa^{\#}+a^{\pi})=0$.

$(2)\Rightarrow (1)$ We verify that
$$\begin{array}{lll}
p^{\pi}[1-b_4^{\#}b_4]b_3&=&a^{\pi}[1-b_4^{\#}b_4]a^{\pi}baa^{\#}\\
&=&a^{\pi}[1-(ba^{\pi})(ba^{\pi})^{\#}]a^{\pi}baa^{\#}\\
&=&a^{\pi}[1-bb^{\#}a^{\pi}]a^{\pi}baa^{\#}\\
&=&a^{\pi}b^{\pi}a^{\pi}baa^{\#}=0.
\end{array}$$
Hence,
$$\begin{array}{rll}
p(a+b)p^{\pi}&=&a^{\#}(aba^{\pi})=0,\\
p(a+b)p&=&a+aa^{\#}baa^{\#}=a(1+a^{\#}b)\in R^{\tiny\textcircled{\#}},\\
(a+b)p^{\pi}&=&ba^{\pi}\in R^{\tiny\textcircled{\#}}.
\end{array}$$
Since every core invertible element has group inverse, by Lemma 2.2, $a+b\in R^{\#}$. Moreover, we derive
$$\begin{array}{rll}
((a+b)p^{\pi})^{\pi}p^{\pi}(a+b)p&=&(1-(a+b)p^{\pi}((a+b)p^{\pi})^{\#})p^{\pi}(a+b)p\\
&=&(1-p^{\pi}(a+b)(a+b)^{\#})p^{\pi}(a+b)p\\
&=&p^{\pi}(1-(a+b)(a+b)^{\#})p^{\pi}(a+b)p\\
&=&p^{\pi}(1-b_4^{\#}b_4)p^{\pi}(a+b)p=0.
\end{array}$$ According to Lemma 2.3, $a+b\in R^{\tiny\textcircled{\#}}$ and $a(a+b)^{\tiny\textcircled{\#}}a^{\pi}=a^{\#}p(a+b)^{\tiny\textcircled{\#}}p^{\pi}=0$.\end{proof}

\section{The main result}

We have at our disposal all the information necessary to prove our main result.

\begin{thm} Let $a,b\in R^{EP}$ and $a^{\pi}b, b^{\pi}a\in R^D$. If $aba^{\pi}=bab^{\pi}=0$, then the following are equivalent:\end{thm}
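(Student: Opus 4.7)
The plan is to derive the desired equivalence by applying Lemma 2.4 twice, once to the ordered pair $(a,b)$ and once to $(b,a)$, and then combining the resulting symmetric equivalences. The hypotheses of the theorem are manifestly symmetric in $a$ and $b$: both elements are EP (in particular core invertible, so each belongs to $R^{\tiny\textcircled{\#}}$), $a^{\pi}b,b^{\pi}a\in R^D$, and $aba^{\pi}=bab^{\pi}=0$. Thus the pair $(a,b)$ satisfies the hypotheses of Lemma 2.4 verbatim, while the pair $(b,a)$ satisfies them after relabelling.

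First I would apply Lemma 2.4 to $(a,b)$ to obtain the equivalence
$$a+b\in R^{\tiny\textcircled{\#}} \text{ with } a(a+b)^{\tiny\textcircled{\#}}a^{\pi}=0 \;\Longleftrightarrow\; a(1+a^{\#}b)\in R^{\tiny\textcircled{\#}} \text{ and } b^{\pi}a^{\pi}b=0.$$
Next I would apply Lemma 2.4 to $(b,a)$, using the hypothesis $bab^{\pi}=0$ and $b^{\pi}a\in R^D$ in place of the original $aba^{\pi}=0$ and $a^{\pi}b\in R^D$. This yields the mirror equivalence
$$a+b\in R^{\tiny\textcircled{\#}} \text{ with } b(a+b)^{\tiny\textcircled{\#}}b^{\pi}=0 \;\Longleftrightarrow\; b(1+b^{\#}a)\in R^{\tiny\textcircled{\#}} \text{ and } a^{\pi}b^{\pi}a=0.$$
Conjoining the two equivalences above delivers the equivalence of the two conditions in the theorem: on one side, $a+b\in R^{\tiny\textcircled{\#}}$ together with $a(a+b)^{\tiny\textcircled{\#}}a^{\pi}=b(a+b)^{\tiny\textcircled{\#}}b^{\pi}=0$; on the other, core invertibility of both $a(1+a^{\#}b)$ and $b(1+b^{\#}a)$ together with the vanishing conditions $b^{\pi}a^{\pi}b=a^{\pi}b^{\pi}a=0$.

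The main obstacle I expect is not a computation but verifying that Lemma 2.4 is indeed directly applicable in the swapped orientation. For $(b,a)$ I must check that $b\in R^{EP}$ (given), $a\in R^{\tiny\textcircled{\#}}$ (inherited from $a\in R^{EP}$ via Theorem 1.1(5)--(6)), $b^{\pi}a\in R^D$ (given), and $bab^{\pi}=0$ (given). All four follow immediately from the symmetric hypotheses, so the two applications of Lemma 2.4 are legitimate and independent, and the theorem reduces to their conjunction.
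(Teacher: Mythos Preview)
Your argument does not prove the theorem as stated. After conjoining the two applications of Lemma~2.4 you arrive at the equivalence of condition~(1) with
\[
a(1+a^{\#}b)\in R^{\tiny\textcircled{\#}},\quad b(1+b^{\#}a)\in R^{\tiny\textcircled{\#}},\quad b^{\pi}a^{\pi}b=a^{\pi}b^{\pi}a=0,
\]
but condition~(2) of the theorem asserts core invertibility of the \emph{single} element $aa^{\#}b+bb^{\#}a$, not of the pair $a(1+a^{\#}b)=a+aa^{\#}b$ and $b(1+b^{\#}a)=b+bb^{\#}a$. These are genuinely different objects, and nothing you have written bridges them. So the essential content of the theorem --- why core invertibility of $aa^{\#}b+bb^{\#}a$ alone suffices, and why it follows from~(1) --- is missing from your proposal.

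The paper's proof begins exactly as you do (two applications of Lemma~2.4), but then introduces a second projection $q=aa^{\#}bb^{\#}$ and carries out a further Pierce decomposition of $a+aa^{\#}b$ relative to $q$. A substantial computation shows that $q(a+aa^{\#}b)q=aa^{\#}b+bb^{\#}a$ and that the off-diagonal and lower corner pieces behave well enough (using $a^{\pi}b^{\pi}a=b^{\pi}a^{\pi}b=0$ and the EP hypotheses) to invoke Lemma~2.3 in both directions. This second layer of block analysis is the heart of the proof, and it is entirely absent from your plan. Your reduction via Lemma~2.4 is a correct first step, but you must still explain why core invertibility of $a+aa^{\#}b$ (together with the vanishing conditions) is equivalent to core invertibility of $aa^{\#}b+bb^{\#}a$.
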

\begin{enumerate}
\item [(1)] $a+b\in R^{\tiny\textcircled{\#}}, a(a+b)^{\tiny\textcircled{\#}}a^{\pi}=b(a+b)^{\tiny\textcircled{\#}}b^{\pi}=0$.
\vspace{-.5mm}
\item [(2)] $aa^{\#}b+bb^{\#}a\in R^{\tiny\textcircled{\#}}$ and $a^{\pi}b^{\pi}a=b^{\pi}a^{\pi}b=0$.
\end{enumerate}
\begin{proof}  $(1)\Rightarrow (2)$ In view of Lemma 2.4, $a(1+a^{\#}b)\in R^{\tiny\textcircled{\#}}$ and $b^{\pi}a^{\pi}b=0$. Analogously,
$a^{\pi}b^{\pi}a=0$. As in the proof in Lemma 2.4, we see that
$(a(1+a^{\#}b))^{\tiny\textcircled{\#}}=aa^{\#}(a+b)^{\tiny\textcircled{\#}}aa^{\#}$.
Let $q=aa^{\#}bb^{\#}$. Since $aba^{\pi}=0$, we have $b=\left(
\begin{array}{cc}
aa^{\#}b&0\\
a^{\pi}baa^{\#}&ba^{\pi}
\end{array}
\right)_{aa^{\#}}$. In view of ~\cite[Lemma 2.1]{XS}, $ba^{\pi}\in R^{\#}$. Then
$aa^{\#}b\in R^{\#}$ and $(aa^{\#}b)^{\#}=aa^{\#}b^{\#}$ by ~\cite[Theorem 2.3]{MD}. Hence $q^2=q=(aa^{\#}b)(aa^{\#}b)^{\#}$.
Thus $$\begin{array}{rll}
q(a(1+a^{\#}b))(1-q)&=&aa^{\#}bb^{\#}ab^{\pi}+aa^{\#}bb^{\#}aa^{\#}b(1-aa^{\#}bb^{\#})\\
&=&aa^{\#}bb^{\#}aa^{\#}b-aa^{\#}bb^{\#}aa^{\#}b[1-a^{\pi}]bb^{\#})\\
&=&0,\\
q(a(1+a^{\#}b))^{\tiny\textcircled{\#}}(1-q)&=&qaa^{\#}(a+b)^{\tiny\textcircled{\#}}aa^{\#}(1-q)\\
&=&qaa^{\#}(a+b)^{\tiny\textcircled{\#}}aa^{\#}(1-aa^{\#}bb^{\#})\\
&=&qaa^{\#}(a+b)^{\tiny\textcircled{\#}}aa^{\#}b^{\pi}\\
&=&qaa^{\#}(a+b)^{\tiny\textcircled{\#}}b^{\pi}-qaa^{\#}(a+b)^{\tiny\textcircled{\#}}a^{\pi}b^{\pi}\\
&=&aa^{\#}bb^{\#}aa^{\#}(a+b)^{\tiny\textcircled{\#}}b^{\pi}\\
&=&0.
\end{array}$$
We may write
$$a+aa^{\#}b=\left(
\begin{array}{cc}
c_1&0\\
c_2&c_3
\end{array}
\right)_q, (a+aa^{\#}b)^{^{\tiny\textcircled{\#}}}=\left(
\begin{array}{cc}
x_1&0\\
x_2&x_3
\end{array}
\right)_q.$$
As in the proof of Lemma 2.4, we prove that $c_1^{\tiny\textcircled{\#}}=x_1$.
Moreover, we have
$$\begin{array}{rll}
c_1&=&aa^{\#}bb^{\#}a[1+a^{\#}b]aa^{\#}bb^{\#}\\
&=&aa^{\#}bb^{\#}abb^{\#}+aa^{\#}bb^{\#}aa^{\#}baa^{\#}bb^{\#}\\
&=&aa^{\#}bb^{\#}a+aa^{\#}bb^{\#}aa^{\#}b\\
&=&aa^{\#}(1-b^{\pi})a+aa^{\#}bb^{\#}(1-a^{\pi})b\\
&=&aa^{\#}a-aa^{\#}b^{\pi}a+aa^{\#}bb^{\#}b-aa^{\#}bb^{\#}a^{\pi}b\\
&=&a-b^{\pi}a+aa^{\#}b\\
&=&aa^{\#}b+bb^{\#}a.
\end{array}$$ Therefore $aa^{\#}b+bb^{\#}a\in R^{\tiny\textcircled{\#}}$.

$(2)\Rightarrow (1)$ Let $q=aa^{\#}bb^{\#}$. Then $q^2=q\in R$.
Then $$\begin{array}{rll}
q&=&aa^{\#}bb^{\#}aa^{\#}bb^{\#}\\
&=&(1-a^{\pi})bb^{\#}aa^{\#}bb^{\#}\\
&=&bb^{\#}aa^{\#}bb^{\#}-a^{\pi}bb^{\#}aa^{\#}\\
&=&bb^{\#}aa^{\#}bb^{\#}.
\end{array}$$
Since $a,b\in R^{EP}$, we have $(aa^{\#})^*=aa^{\#},(bb^{\#})^*=bb^{\#}.$ Then $q^*=q$, i.e., $q\in R$ is a projection.
In view of Lemma 2.4, it will suffice to prove that $a+aa^{\#}b=a(1+a^{\#}a)\in R^{\tiny\textcircled{\#}}$.
We check that
$$\begin{array}{rll}
qa(1-q)&=&aa^{\#}bb^{\#}a[1-aa^{\#}bb^{\#}]\\
&=&aa^{\#}bb^{\#}a-aa^{\#}bb^{\#}a\\
&=&aa^{\#}(1-b^{\pi})a-aa^{\#}bb^{\#}a\\
&=&a-(1-a^{\pi})b^{\pi}a-aa^{\#}bb^{\#}a\\
&=&a-b^{\pi}a-aa^{\#}(1-b^{\pi})a\\
&=&-b^{\pi}a+(1-a^{\pi})b^{\pi}a\\
&=&0,\\
qaa^{\#}b(1-q)&=&aa^{\#}bb^{\#}aa^{\#}b[1-aa^{\#}bb^{\#}]\\
&=&aa^{\#}bb^{\#}[aa^{\#}b-aa^{\#}baa^{\#}bb^{\#}]\\
&=&aa^{\#}bb^{\#}[aa^{\#}b-aa^{\#}b(1-a^{\pi})bb^{\#}]\\
&=&aa^{\#}bb^{\#}[aa^{\#}b-aa^{\#}b^2b^{\#}-aa^{\#}ba^{\pi}bb^{\#}]\\
&=&0,\\
\end{array}$$
$$\begin{array}{rll}
(1-q)aa^{\#}bq&=&[1-aa^{\#}bb^{\#}]aa^{\#}baa^{\#}bb^{\#}\\
&=&[1-aa^{\#}bb^{\#}]aa^{\#}b\\
&=&aa^{\#}b-aa^{\#}bb^{\#}(1-a^{\pi})b\\
&=&aa^{\#}(1-b^{\pi})a^{\pi})b\\
&=&0,\\
(1-q)aa^{\#}b(1-q)&=&[1-aa^{\#}bb^{\#}]aa^{\#}b[1-aa^{\#}bb^{\#}]\\
&=&[aa^{\#}b-aa^{\#}bb^{\#}(1-a^{\pi})b][1-aa^{\#}bb^{\#}]\\
&=&[aa^{\#}bb^{\#}a^{\pi})b][1-aa^{\#}bb^{\#}]\\
&=&[aa^{\#}(1-b^{\pi})a^{\pi})b][1-aa^{\#}bb^{\#}]\\
&=&0.
\end{array}$$
Then $$a=\left(
\begin{array}{cc}
a_1&0\\
a_3&a_4
\end{array}
\right)_q, aa^{\#}b=\left(
\begin{array}{cc}
b_1&0\\
0&0
\end{array}
\right)_q.$$ Here $a_1=qaq, a_3=q^{\pi}aq, a_4=q^{\pi}aq^{\pi}$.
Then $$a+aa^{\#}b=\left(
\begin{array}{cc}
a_1+b_1&0\\
a_3&a_4
\end{array}
\right)_q,$$ where $$\begin{array}{rll}
a_4&=&(1-aa^{\#}bb^{\#})a(1-aa^{\#}bb^{\#})\\
&=&(1-aa^{\#}bb^{\#})ab^{\pi}\\
&=&ab^{\pi})-aa^{\#}bb^{\#}ab^{\pi}\\
&=&ab^{\pi}\in R^{\tiny\textcircled{\#}}.
\end{array}$$
Set $z=a+aa^{\#}b$. Then $zq^{\pi}=[a+aa^{\#}b][1-aa^{\#}bb^{\#}]=ab^{\pi}+aa^{\#}b-aa^{\#}baa^{\#}bb^{\#}=
ab^{\pi}+aa^{\#}b-aa^{\#}b(1-a^{\pi})bb^{\#}=ab^{\pi}\in R^{\tiny\textcircled{\#}}$. We verify that
$$\begin{array}{lll}
qzq&=&qaq+qaa^{\#}bq\\
&=&aa^{\#}bb^{\#}a+aa^{\#}bb^{\#}aa^{\#}b\\
&=&aa^{\#}(1-b^{\pi})a+aa^{\#}bb^{\#}(1-a^{\pi})b\\
&=&a-(1-a^{\pi})b^{\pi}a+aa^{\#}b-aa^{\#}(1-b^{\pi})a^{\pi}b\\
&=&a-b^{\pi}a+aa^{\#}b+aa^{\#}b^{\pi}a^{\pi}b\\
&=&aa^{\#}b+bb^{\#}a\in R^{\tiny\textcircled{\#}}.
\end{array}$$
Moreover, we have
$$\begin{array}{rll}
qzq^{\pi}&=&qa(1-q)+qaa^{\#}b(1-q)=0,\\
(zq^{\pi})^{\pi}q^{\pi}zq&=&q^{\pi}a_4^{\pi}a_3\\
&=&q^{\pi}(ab^{\pi})^{\pi}q^{\pi}aq\\
&=&q^{\pi}[1-aa^{\#}b^{\pi}][1-aa^{\#}bb^{\#}]abb^{\#}\\
&=&q^{\pi}[1-aa^{\#}b^{\pi}][a-aa^{\#}bb^{\#}a]\\
&=&q^{\pi}[a-[1-a^{\pi}]b^{\pi}a][1-a^{\#}bb^{\#}a]\\
&=&[1-a^{\#}bb^{\#}]bb^{\#}[1-aa^{\#}bb^{\#}]a\\
&=&[1-a^{\#}bb^{\#}][bb^{\#}-bb^{\#}aa^{\#}bb^{\#}]a\\
&=&[1-a^{\#}bb^{\#}][1-b^{\pi}]a^{\pi}bb^{\#}a\\
&=&[1-a^{\#}bb^{\#}]a^{\pi}[1-b^{\pi}]a\\
&=&0.
\end{array}$$ In light of Lemma 2.4, $a+aa^{\#}b=z\in R^{\tiny\textcircled{\#}}$.
Additionally, we have
$$\begin{array}{lll}
(a+aa^{\#}b)^{\tiny\textcircled{\#}}&=&\left(
\begin{array}{cc}
(a_1+b_1)^{\tiny\textcircled{\#}}&0\\
v&a_4^{\tiny\textcircled{\#}}
\end{array}
\right)_q,
\end{array}$$ where $v=-a_4^{\tiny\textcircled{\#}}a_3(a_1+b_1)^{\tiny\textcircled{\#}}.$

In light of Lemma 2.4, $a+b\in R^{\tiny\textcircled{\#}}$. By the argument above, we have $$\begin{array}{c}
aa^{\#}bb^{\#}(a+aa^{\#}b)^{\tiny\textcircled{\#}}(1-aa^{\#}bb^{\#})=0,\\
(a+aa^{\#}b)^{\tiny\textcircled{\#}}=aa^{\#}(a+b)^{\tiny\textcircled{\#}}aa^{\#}.
\end{array}$$ Therefore we get $$\begin{array}{rll}
ba^{\pi}(a+b)^{\tiny\textcircled{\#}}b^{\pi}&=&baa^{\#}(a+b)^{\tiny\textcircled{\#}}b^{\pi}\\
&=&baa^{\#}(a+b)^{\tiny\textcircled{\#}}(a^{\pi}+aa^{\#})b^{\pi}\\
&=&b[(bb^{\#}aa^{\#}][aa^{\#}(a+b)^{\tiny\textcircled{\#}}aa^{\#}]aa^{\#}b^{\pi}\\
&=&b[aa^{\#}-b^{\pi}aa^{\#}][aa^{\#}(a+b)^{\tiny\textcircled{\#}}aa^{\#}]aa^{\#}b^{\pi}\\
&=&baa^{\#}bb^{\#}aa^{\#}][aa^{\#}(a+b)^{\tiny\textcircled{\#}}aa^{\#}]aa^{\#}b^{\pi}\\
&=&b[aa^{\#}bb^{\#}](a+b)^{\tiny\textcircled{\#}}[aa^{\#}(1-aa^{\#}bb^{\#})]\\
&=&b[aa^{\#}bb^{\#}](a+b)^{\tiny\textcircled{\#}}[1-aa^{\#}bb^{\#}]\\
&=&0,
\end{array}$$ as asserted.\end{proof}

\begin{cor} Let $a,b\in R^{EP}$ and $a^{\pi}b,b^{\pi}a\in R^D$. If $aa^{\#}b=bb^{\#}a\in R^{\tiny\textcircled{\#}}$ and $\frac{1}{2}\in R$, then $a+b\in R^{\tiny\textcircled{\#}}$.\end{cor}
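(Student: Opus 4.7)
The plan is to show that the hypotheses of Corollary 3.2 imply the hypotheses of Theorem 3.1, namely $aba^{\pi}=bab^{\pi}=0$ together with condition (2) of that theorem, and then invoke the implication $(2)\Rightarrow(1)$ to conclude $a+b\in R^{\tiny\textcircled{\#}}$.

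The first step is to extract two useful identities from the equation $aa^{\#}b=bb^{\#}a$. Multiplying on the left by $a$ and using that $a\cdot aa^{\#}=a$ for any group invertible element gives $ab=abb^{\#}a$; symmetrically, multiplying by $b$ yields $ba=baa^{\#}b$. From the first identity, $aba^{\pi}=abb^{\#}\cdot aa^{\pi}=0$, and from the second, $bab^{\pi}=baa^{\#}\cdot bb^{\pi}=0$. This supplies the side hypotheses of Theorem 3.1.

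The next step is condition (2) of Theorem 3.1. Since $aa^{\#}b=bb^{\#}a$, the element $aa^{\#}b+bb^{\#}a$ equals $2\,aa^{\#}b$; using that $1/2\in R$ is a central self-adjoint unit and that $aa^{\#}b\in R^{\tiny\textcircled{\#}}$ by hypothesis, a direct verification via the characterization of Theorem 1.1(2) shows $(2\,aa^{\#}b)^{\tiny\textcircled{\#}}=\tfrac{1}{2}(aa^{\#}b)^{\tiny\textcircled{\#}}$, so $aa^{\#}b+bb^{\#}a\in R^{\tiny\textcircled{\#}}$. For the cross-vanishing, the identity $aa^{\#}b=bb^{\#}a$ rewrites $b^{\pi}a=a-bb^{\#}a=a-aa^{\#}b$, whence $a^{\pi}b^{\pi}a=a^{\pi}a-a^{\pi}(aa^{\#})b=0$; the symmetric calculation (using $a^{\pi}b=b-aa^{\#}b=b-bb^{\#}a$) yields $b^{\pi}a^{\pi}b=0$.

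With all prerequisites of Theorem 3.1(2) in place, the implication $(2)\Rightarrow(1)$ of that theorem immediately gives $a+b\in R^{\tiny\textcircled{\#}}$. There is no real obstacle here: the whole argument is an exercise in exploiting the single identity $aa^{\#}b=bb^{\#}a$ to collapse the four conditions of Theorem 3.1 into manifestly satisfied ones. The only place where a moment of care is needed is the preservation of core invertibility under multiplication by $2$, which rests on $2$ being central and satisfying $2^{*}=2$ in any $*$-ring containing $1/2$.
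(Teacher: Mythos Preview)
Your proof is correct and follows essentially the same approach as the paper: verify that $aa^{\#}b=bb^{\#}a$ forces $aba^{\pi}=bab^{\pi}=0$ and $a^{\pi}b^{\pi}a=b^{\pi}a^{\pi}b=0$, note that $aa^{\#}b+bb^{\#}a=2\,aa^{\#}b$ is core invertible because $\tfrac{1}{2}\in R$, and then invoke the implication $(2)\Rightarrow(1)$ of Theorem~3.1. Your intermediate identities $ab=abb^{\#}a$ and $ba=baa^{\#}b$ and the rewriting $b^{\pi}a=a-aa^{\#}b$ make the vanishing steps slightly more transparent than the paper's version, but the argument is the same.
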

\begin{proof} Since $aa^{\#}b=bb^{\#}a\in R^{\tiny\textcircled{\#}}$ and $\frac{1}{2}\in R$, we have
$aa^{\#}b+bb^{\#}a=2aa^{\#}b\in R^{\tiny\textcircled{\#}}$. Also we have
$aba^{\pi}=aa^{\#}aba^{\pi}=abb^{\#}aa^{\pi}=0$. Similarly, $bab^{\pi}=bb^{\#}bab^{\pi}=baa^{\#}bb^{\pi}=0$.
Moreover, $a^{\pi}b^{\pi}a=a^{\pi}bb^{\#}a=a^{\pi}aa^{\#}b=0.$
Likewise, $b^{\pi}a^{\pi}b=0.$ This completes the proof by Theorem 3.1.\end{proof}

\begin{cor} Let $a,b\in R^{EP}$. If $ab=ba$ and $a^*b=ba^*$, then the following are equivalent:\end{cor}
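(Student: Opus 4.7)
The plan is to reduce the corollary to Theorem~3.1, checking that the strong commutativity hypotheses render all of that theorem's auxiliary assumptions automatic.

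First I would promote $ab=ba$ to pairwise commutativity of $\{a,a^{\#},b,b^{\#}\}$, using the standard double-commutant property of the group inverse: if $xy=yx$ and $x\in R^{\#}$, then $x^{\#}y=yx^{\#}$ (one way to see this is to pass to the Peirce decomposition with respect to $xx^{\#}$, in which $x$ is invertible on the first corner and zero on the second, forcing any commuting $y$ to be block diagonal). Applying this to the pairs $(x,y)=(a,b),(b,a),(a,b^{\#})$ yields $a^{\#}b=ba^{\#}$, $b^{\#}a=ab^{\#}$, and $a^{\#}b^{\#}=b^{\#}a^{\#}$. In particular, the projections $aa^{\#}, bb^{\#}, a^{\pi}, b^{\pi}$ commute with everything in $\{a,b,a^{\#},b^{\#}\}$ and with each other.

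Next I would verify the hypotheses of Theorem~3.1. Using $a^{\#}b=ba^{\#}$ and $a^{2}a^{\#}=a$,
\begin{equation*}
aba^{\pi}=ab-ab\cdot aa^{\#}=ab-a(ab)a^{\#}=ab-a^{2}a^{\#}b=0,
\end{equation*}
and symmetrically $bab^{\pi}=0$. Since $a^{\pi}$ commutes with both $b$ and $b^{\#}$, the element $a^{\pi}b$ lies in $R^{\#}\subseteq R^{D}$ with group inverse $a^{\pi}b^{\#}$; symmetrically $b^{\pi}a\in R^{D}$. Moreover $a^{\pi}b^{\pi}a=a^{\pi}\cdot ab^{\pi}=(a^{\pi}a)b^{\pi}=0$, and symmetrically $b^{\pi}a^{\pi}b=0$, so the subsidiary conditions of Theorem~3.1(2) hold automatically.

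Theorem~3.1 then reduces the stated equivalence to the equivalence
\begin{equation*}
a+b\in R^{\tiny\textcircled{\#}},\ a(a+b)^{\tiny\textcircled{\#}}a^{\pi}=b(a+b)^{\tiny\textcircled{\#}}b^{\pi}=0 \iff aa^{\#}b+bb^{\#}a\in R^{\tiny\textcircled{\#}}.
\end{equation*}
The role of the still-unused hypothesis $a^{*}b=ba^{*}$ should be to eliminate the auxiliary vanishing conditions from the first line: together with $a,b\in R^{EP}$, which gives $(aa^{\#})^{*}=aa^{\#}$ and $(bb^{\#})^{*}=bb^{\#}$, and with the commutativities from the first step, it forces $(a+b)^{*}$ to commute with $a+b$, so $a+b$ is normal. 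A uniqueness argument for the core inverse should then force $(a+b)^{\tiny\textcircled{\#}}$ into the commutant of the projections $aa^{\#}$ and $bb^{\#}$, trivialising the auxiliary conditions.

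The main obstacle I foresee is this last step. To convert normality of $a+b$ into commutativity of $(a+b)^{\tiny\textcircled{\#}}$ with $aa^{\#}$ and $bb^{\#}$, I would exhibit, whenever $a+b\in R^{\tiny\textcircled{\#}}$, an explicit candidate for the core inverse inside the commutative $*$-subalgebra generated by $\{a,b,a^{\#},b^{\#},a^{*},b^{*}\}$, verify the defining axioms $xa^{2}=a$, $ax^{2}=x$, $(ax)^{*}=ax$ of Theorem~1.1(2), and invoke the uniqueness of the core inverse to identify the candidate with $(a+b)^{\tiny\textcircled{\#}}$.
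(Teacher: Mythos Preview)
Your reduction to Theorem~3.1 is exactly right, and your verifications of $aba^{\pi}=bab^{\pi}=0$, $a^{\pi}b,b^{\pi}a\in R^{D}$, and $a^{\pi}b^{\pi}a=b^{\pi}a^{\pi}b=0$ match the paper's argument. The direction $(2)\Rightarrow(1)$ is therefore complete.

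The gap is in your proposed mechanism for $(1)\Rightarrow(2)$, namely the claim that the hypotheses force $a+b$ to be normal and that $\{a,b,a^{\#},b^{\#},a^{*},b^{*}\}$ generates a commutative $*$-subalgebra. Neither holds: EP does not imply normal (any invertible non-normal element is EP), so already with $b=0$ the element $a+b=a$ need not be normal, and for the same reason $a$ need not commute with $a^{*}$. Your ``explicit candidate in the commutative $*$-subalgebra'' therefore has no home, and the plan stalls.

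The paper sidesteps this entirely. Rather than aiming for normality of $a+b$, it works with the projection $a^{\pi}$: from $a^{\#}b=ba^{\#}$ one gets $a^{\pi}(a+b)=(a+b)a^{\pi}$, and since $a\in R^{EP}$ gives $(a^{\pi})^{*}=a^{\pi}$, applying $*$ yields $a^{\pi}(a+b)^{*}=(a+b)^{*}a^{\pi}$ as well. Now one invokes the known commutation principle for core inverses (the paper cites \cite[Theorem 3.2]{CZP}): a projection that commutes with both $x$ and $x^{*}$ automatically commutes with $x^{\tiny\textcircled{\#}}$. This gives $a^{\pi}(a+b)^{\tiny\textcircled{\#}}=(a+b)^{\tiny\textcircled{\#}}a^{\pi}$ and hence $a(a+b)^{\tiny\textcircled{\#}}a^{\pi}=0$; symmetrically for $b$. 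No normality, no explicit formula, no commutative subalgebra is needed---just the single projection $a^{\pi}$ and its interaction with the core inverse.
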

\begin{enumerate}
\item [(1)] $a+b\in R^{\tiny\textcircled{\#}}.$
\vspace{-.5mm}
\item [(2)] $aa^{\#}b+bb^{\#}a\in R^{\tiny\textcircled{\#}}$.
\end{enumerate}
\begin{proof} Since $ab=ba$, by~\cite[Theorem 2.2]{D}, $a^{\#}b=ba^{\#}$. Hence, $a^{\pi}b=ba^{\pi}$, and so
$a^{\pi}b\in R^D$. Likewise, $b^{\pi}a\in R^D$. Moreover, we have $aba^{\pi}=bab^{\pi}=0$.

$(1)\Rightarrow (2)$ Since $ab=ba$ and $a^*b=ba^*$, we have $a^{\pi}(a+b)=(a+b)a^{\pi}$. Since $a\in R^{EP}$, we have $(a^{\pi})^*=a^{\pi}$ by ~\cite[Lemma 2.1]{XCB}. Hence $a^{\pi}(a+b)^*=(a+b)^*a^{\pi}$. In light of ~\cite[Theorem 3.2]{CZP}, we have $a^{\pi}(a+b)^{\tiny\textcircled{\#}}=(a+b)^{\tiny\textcircled{\#}}a^{\pi}$.
Accordingly, $a(a+b)^{\tiny\textcircled{\#}}a^{\pi}=0$. Likewise, $b(a+b)^{\tiny\textcircled{\#}}b^{\pi}=0$. Therefore
$aa^{\#}b+bb^{\#}a\in R^{\tiny\textcircled{\#}}$ by Theorem 3.1.

$(2)\Rightarrow (1)$ Clearly, $b^{\pi}a^{\pi}b=b^{\pi}(ba^{\pi})=0$. Likewise,
$b^{\pi}a^{\pi}b=0$. This completes the proof by Theorem 3.1.\end{proof}

\section{applications}

Let $M=\left(
  \begin{array}{cc}
    A&B\\
    C&D
  \end{array}
\right)\in {\Bbb C}^{(m+n)\times (m+n)},$ where $A\in {\Bbb C}^{m\times m},B\in {\Bbb C}^{m\times n},C\in {\Bbb C}^{n\times m},D\in {\Bbb C}^{n\times n}.$ We now applying the foregoing to present the conditions under which the block complex $M$ has core inverse.

\begin{lem} Let $A$ and $D$ have core inverses. If $D^{\pi}C=0$, then $\left(
  \begin{array}{cc}
    A & 0 \\
    C & D
  \end{array}
\right)$ has core inverse.\end{lem}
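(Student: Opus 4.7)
My plan is to apply Lemma 2.3 directly to $M$ in the $*$-ring $R={\Bbb C}^{(m+n)\times (m+n)}$ with the natural block projection
$$p=\left(\begin{array}{cc} I_m & 0 \\ 0 & 0 \end{array}\right).$$
Clearly $p^2=p=p^*$, so $p$ is a projection in $R$. Computing the Pierce decomposition of $M$ at $p$, the block-lower-triangular form of $M$ gives $pMp^{\pi}=0$ automatically, so the first hypothesis of Lemma 2.3 is immediate. Moreover $pMp$ is the matrix with $A$ in the top-left $m\times m$ corner and zeros elsewhere, $p^{\pi}Mp$ is the matrix with $C$ in the bottom-left corner and zeros elsewhere, and $Mp^{\pi}=p^{\pi}Mp^{\pi}$ is the matrix with $D$ in the bottom-right corner and zeros elsewhere.

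Next I would verify the remaining hypotheses. Since $A\in{\Bbb C}^{m\times m}$ has core inverse $A^{\tiny\textcircled{\#}}$, a direct check of the three defining equations $xa^{2}=a$, $ax^{2}=x$, $(ax)^{*}=ax$ from Theorem 1.1 shows that $pMp\in R^{\tiny\textcircled{\#}}$ with core inverse equal to the matrix having $A^{\tiny\textcircled{\#}}$ in the top-left corner and zeros elsewhere; similarly $Mp^{\pi}\in R^{\tiny\textcircled{\#}}$ with core inverse having $D^{\tiny\textcircled{\#}}$ in the bottom-right corner and zeros elsewhere. Because every core invertible element has group inverse, I can then compute
$$(Mp^{\pi})^{\pi}=1-(Mp^{\pi})(Mp^{\pi})^{\#}=\left(\begin{array}{cc} I_m & 0 \\ 0 & D^{\pi}\end{array}\right),$$
and therefore
$$(Mp^{\pi})^{\pi}\,p^{\pi}Mp=\left(\begin{array}{cc} I_m & 0 \\ 0 & D^{\pi}\end{array}\right)\left(\begin{array}{cc} 0 & 0 \\ C & 0 \end{array}\right)=\left(\begin{array}{cc} 0 & 0 \\ D^{\pi}C & 0 \end{array}\right)=0,$$
where the last equality is exactly the hypothesis $D^{\pi}C=0$.

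All four hypotheses of Lemma 2.3 are thereby verified, and the lemma yields $M\in R^{\tiny\textcircled{\#}}$, as claimed. The entire argument is simply bookkeeping with the Pierce decomposition; the only real observation is that the natural block projection $p$ converts the scalar hypothesis $D^{\pi}C=0$ into exactly the cross-term condition $(Mp^{\pi})^{\pi}p^{\pi}Mp=0$ demanded by Lemma 2.3, so I do not foresee any substantive obstacle.
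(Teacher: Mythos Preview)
Your argument is correct. The projection $p=\left(\begin{smallmatrix} I_m & 0\\ 0 & 0\end{smallmatrix}\right)$ satisfies $p^2=p=p^*$, the Pierce pieces are exactly as you compute, and the hypothesis $D^{\pi}C=0$ is precisely what makes $(Mp^{\pi})^{\pi}\,p^{\pi}Mp$ vanish, so Lemma~2.3 applies and gives $M\in R^{\tiny\textcircled{\#}}$.

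The paper, however, does not argue this way: its proof of Lemma~4.1 is a one-line citation of \cite[Theorem 2.9]{XS}, an external result on core invertibility of block lower-triangular matrices over a ring. Your route is genuinely different in that it keeps everything internal to the present paper, deducing the block-matrix statement from the abstract Pierce-decomposition criterion (Lemma~2.3) rather than importing it ready-made. What your approach buys is self-containment and a nice illustration that Lemma~2.3 already subsumes the cited theorem in the situation at hand; what the paper's citation buys is brevity and avoids re-deriving a known fact. Either way the mathematical content is the same observation that a block lower-triangular matrix with core-invertible diagonal blocks and $D^{\pi}C=0$ is core invertible.
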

\begin{proof} This is obvious by ~\cite[Theorem 2.9]{XS}.\end{proof}

\begin{lem} Let $A,B\in {\Bbb C}^{n\times n}$ be EP and $AB=\lambda BA$ for some $\lambda \in {\Bbb C}^*$. If $AA^{\#}B+BB^{\#}A$ has core inverse, then $A+B$ has core inverse.\end{lem}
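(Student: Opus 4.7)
The plan is to invoke Theorem~3.1 in the direction $(2)\Rightarrow(1)$. Working in ${\Bbb C}^{n\times n}$, every matrix has a Drazin inverse, so the hypotheses $A^{\pi}B,\,B^{\pi}A\in R^D$ are automatic, and since $AA^{\#}B+BB^{\#}A\in R^{\tiny\textcircled{\#}}$ is given, the only remaining obligations are the four identities
$$ABA^{\pi}=BAB^{\pi}=0 \quad \text{and}\quad A^{\pi}B^{\pi}A=B^{\pi}A^{\pi}B=0.$$

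The main step is to extract two commutation identities from $AB=\lambda BA$: namely, $A^{\pi}B=BA^{\pi}$ and $B^{\pi}A=AB^{\pi}$. Left-multiplying $AB=\lambda BA$ by $A^{\pi}$ and using $A^{\pi}A=0$ gives $\lambda A^{\pi}BA=0$; because $\lambda\neq 0$, this forces $A^{\pi}BA=0$, whence $A^{\pi}B\cdot AA^{\#}=0$ and so $A^{\pi}B=A^{\pi}B(AA^{\#}+A^{\pi})=A^{\pi}BA^{\pi}$. Right-multiplying $AB=\lambda BA$ by $A^{\pi}$ gives $ABA^{\pi}=0$, from which $AA^{\#}\cdot BA^{\pi}=0$, so $BA^{\pi}=A^{\pi}BA^{\pi}$. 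Combining, $A^{\pi}B=A^{\pi}BA^{\pi}=BA^{\pi}$. The relation $BA=\lambda^{-1}AB$ has the same form and $B\in R^{EP}$, so the symmetric argument yields $B^{\pi}A=AB^{\pi}$.

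With these commutations in hand, each of the four required identities collapses to a one-line calculation: $ABA^{\pi}=A(A^{\pi}B)=(AA^{\pi})B=0$, $BAB^{\pi}=B(B^{\pi}A)=(BB^{\pi})A=0$, $A^{\pi}B^{\pi}A=A^{\pi}(AB^{\pi})=(A^{\pi}A)B^{\pi}=0$, and $B^{\pi}A^{\pi}B=B^{\pi}(BA^{\pi})=(B^{\pi}B)A^{\pi}=0$. Theorem~3.1 then delivers $A+B\in R^{\tiny\textcircled{\#}}$. I expect the derivation of $A^{\pi}B=BA^{\pi}$ to be the only real content; the hypothesis $\lambda\in{\Bbb C}^*$ is used precisely at the step where one divides by $\lambda$ to conclude $A^{\pi}BA=0$, and everything else reduces to formal manipulation with the spectral idempotents $A^{\pi}$ and $B^{\pi}$.
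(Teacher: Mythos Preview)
Your proof is correct and follows the same overall strategy as the paper: verify the hypotheses of Theorem~3.1 and apply the implication $(2)\Rightarrow(1)$. The one noteworthy difference is in how the commutation $A^{\pi}B=BA^{\pi}$ (and symmetrically $B^{\pi}A=AB^{\pi}$) is obtained. The paper invokes Drazin's result \cite[Theorem~2.2]{D}, which from $AB=\lambda BA$ yields $A^{D}B=\lambda^{-1}BA^{D}$ and $AB^{D}=\lambda^{-1}B^{D}A$, and then computes $B^{\pi}A=A-B(B^{D}A)=A-\lambda(BA)B^{D}=A-ABB^{D}=AB^{\pi}$. You instead derive the commutation directly from the group-inverse identities $A^{\pi}A=AA^{\pi}=0$: left- and right-multiplying $AB=\lambda BA$ by $A^{\pi}$ gives $A^{\pi}BA=0$ and $ABA^{\pi}=0$, whence $A^{\pi}B=A^{\pi}BA^{\pi}=BA^{\pi}$. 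Your argument is slightly more elementary in that it avoids the external citation, while the paper's route makes explicit the stronger fact that the Drazin inverses themselves $\lambda$-commute. Either way, the remaining four identities and the appeal to Theorem~3.1 are identical.
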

\begin{proof} Since every square complex matrix has Drazin inverse, $A^{\pi}B, B^{\pi}A\in ( {\Bbb C}^{n\times n})^D$. Since $AB=\lambda BA$ for some $\lambda \in {\Bbb C}^*$, we verify that $ABA^{\pi}=(\lambda BA)A^{\pi}=0$. Likewise, $BAB^{\pi}=0$. Since $AB=\lambda BA$, by virtue of ~\cite[Theorem 2.2]{D}, $A^DB=\lambda^{-1}BA^D$ and $AB^D=\lambda^{-1}B^DA$. Then $B^{\pi}A=A-B(B^DA)=A-\lambda (BA)B^D=A-ABB^D=AB^{\pi}$, and so $A^{\pi}B^{\pi}A=A^{\pi}AB^{\pi}=0$.
Likewise, $B^{\pi}A^{\pi}B=0$. Therefore $A+B\in {\Bbb C}^{n\times n}$ has core inverse by Theorem 3.1.\end{proof}

We are ready to prove:

\begin{thm} Let $A,D$ and $BC$ be EP. If $(BC)^{\pi}A=0, C(BC)^{\pi}=0,  (CB)^{\pi}D$ $=0, B(CB)^{\pi}=0, AB=\lambda BD$ and $DC=\lambda CA$ for some $\lambda \in {\Bbb C}^*$, then $M$ has core inverse.\end{thm}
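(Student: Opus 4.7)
The plan is to decompose $M$ as a sum of two EP elements that $\lambda$-commute and then invoke Lemma 4.2. Specifically, set
$$P=\left(\begin{array}{cc} A & 0 \\ 0 & D \end{array}\right), \qquad Q=\left(\begin{array}{cc} 0 & B \\ C & 0 \end{array}\right),$$
so that $M=P+Q$.

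For the EP hypothesis on each summand: $P$ is EP with $P^{\#}=\left(\begin{array}{cc} A^{\#} & 0 \\ 0 & D^{\#} \end{array}\right)$ directly from the EP property of $A$ and $D$. For $Q$, I will use Cline's identities $(CB)^{\#}C=C(BC)^{\#}$ and $(BC)^{\#}B=B(CB)^{\#}$ (valid since $BC$ has group inverse), combined with $B(CB)^{\pi}=C(BC)^{\pi}=0$, to obtain $Q^{\#}=\left(\begin{array}{cc} 0 & B(CB)^{\#} \\ C(BC)^{\#} & 0 \end{array}\right)$. Then $QQ^{\#}$ and $Q^{\#}Q$ are block diagonal, and the EP of $BC$ together with the range identities $\operatorname{Range}(B)=\operatorname{Range}(BC)$ and $\operatorname{Range}(C)=\operatorname{Range}(CB)$ (extracted from $B(CB)^{\pi}=0$ and $C(BC)^{\pi}=0$) force those diagonal blocks to be the orthogonal projections onto $\operatorname{Range}(B)$ and $\operatorname{Range}(C)$ respectively, so they are Hermitian and $Q$ is EP.

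The relations $AB=\lambda BD$ and $DC=\lambda CA$ yield
$$PQ=\left(\begin{array}{cc} 0 & AB \\ DC & 0 \end{array}\right)=\lambda\left(\begin{array}{cc} 0 & BD \\ CA & 0 \end{array}\right)=\lambda QP.$$
By Lemma 4.2 it now suffices to show that
$$S:=PP^{\#}Q+QQ^{\#}P=\left(\begin{array}{cc} A & AA^{\#}B \\ DD^{\#}C & D \end{array}\right)$$
has core inverse, where I have used $(BC)(BC)^{\#}A=A$ and $(CB)(CB)^{\#}D=D$ coming from $(BC)^{\pi}A=(CB)^{\pi}D=0$. For $S$, I would take $p=\left(\begin{array}{cc} AA^{\#} & 0 \\ 0 & DD^{\#} \end{array}\right)$ and derive $AA^{\#}BDD^{\#}=AA^{\#}B$ and $DD^{\#}CAA^{\#}=DD^{\#}C$ from the commutation relations combined with the EP property of $A$ and $D$; these identities imply $pS=Sp=pSp=S$, so $S$ sits inside the corner $*$-ring $pRp$. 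Inside $pRp$ the diagonal blocks $A$ and $D$ are invertible with inverses $A^{\#}$ and $D^{\#}$, so $S$ admits a block LU factorisation there, reducing the problem to the core invertibility of the Schur complement $D-DD^{\#}CA^{\#}B$ in $DD^{\#}R\,DD^{\#}$.

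The main obstacle is this last step: producing the core inverse of the Schur complement inside the corner ring $DD^{\#}R\,DD^{\#}$. This is where the full strength of the skew-commutation identities $AB=\lambda BD$ and $DC=\lambda CA$, the EP properties of $BC$ and $CB$, and the four range conditions must combine precisely; the rest of the argument is routine block algebra. Once the Schur complement is handled, Lemma 4.2 delivers that $M=P+Q$ has core inverse, completing the proof.
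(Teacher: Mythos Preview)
Your overall plan---split $M=P+Q$ with $P=\operatorname{diag}(A,D)$, verify that $P$ and $Q$ are EP, check $PQ=\lambda QP$, and invoke Lemma~4.2---is exactly the paper's strategy. The gap is in the last step: to apply Lemma~4.2 you must show that $S=PP^{\#}Q+QQ^{\#}P$ has core inverse, and here your argument stops short. You reduce (via the corner ring $pRp$ and a block LU factorisation) to the core invertibility of the Schur complement $D-DD^{\#}CA^{\#}B$ inside $DD^{\#}\,{\Bbb C}^{n\times n}\,DD^{\#}$, then declare this the ``main obstacle'' without resolving it. Nothing you have established forces that Schur complement to have index at most one, and the hypotheses do not make it transparent; as written, the proof is incomplete.

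The paper handles $S$ quite differently. In its computation of $PP^{\#}Q$ the $(1,2)$-block is recorded as $0$, so that $S$ comes out as the lower-triangular matrix $\left(\begin{smallmatrix}A&0\\ DD^{\#}C&D\end{smallmatrix}\right)$; since $D^{\pi}(DD^{\#}C)=0$, Lemma~4.1 then furnishes the core inverse of $S$ in one line, with no Schur complement in sight. Your computation of that $(1,2)$-entry as $AA^{\#}B$ is in fact the correct matrix product (the product $\operatorname{diag}(AA^{\#},DD^{\#})\cdot Q$ plainly has $AA^{\#}B$ there), so the paper's shortcut rests on a slip. Either route therefore still needs work: you would have to carry the Schur-complement argument to its conclusion, or else give a direct reason why the full matrix $\left(\begin{smallmatrix}A&AA^{\#}B\\ DD^{\#}C&D\end{smallmatrix}\right)$ has core inverse.
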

\begin{proof} Write $M=P+Q$, where $$P=\left(
  \begin{array}{cc}
    A & 0 \\
    0 & D
  \end{array}
\right), Q=\left(
  \begin{array}{cc}
   0 & B \\
   C & 0
  \end{array}
\right).$$ Since $A$ and $D$ are EP, so is $P$.

Clearly, we have $$(Q^2)^D=\left(
  \begin{array}{cc}
   BC & 0 \\
   0 & CB
  \end{array}
\right)^D=\left(
  \begin{array}{cc}
   (BC)^D & 0 \\
   0 & (CB)^D
  \end{array}
\right).$$ Hence, $$Q^D=Q(Q^2)^D=\left(
  \begin{array}{cc}
  0& B(CB)^D\\
  C(BC)^D&0
  \end{array}
\right).$$ Thus, $$Q^{\pi}=\left(
  \begin{array}{cc}
  (BC)^{\pi}&0\\
  0&(CB)^{\pi}
  \end{array}
\right).$$
Hence, $QQ^D=Q^DQ, Q^D=Q^DQQ^D.$ Moreover, we have
$$\begin{array}{lll}
QQ^{\pi}&=&\left(
  \begin{array}{cc}
   0 & B \\
   C & 0
  \end{array}
\right)\left(
  \begin{array}{cc}
   (BC)^{\pi} & 0 \\
  0 & (CB)^{\pi}
  \end{array}
\right)\\
&=&\left(
  \begin{array}{cc}
  0&B(CB)^{\pi}\\
  C(BC)^{\pi} & 0
  \end{array}
\right)=0;
\end{array}$$ whence $Q=Q^2Q^D$. We infer that $Q$ has group inverse and $$QQ^{\#}=\left(
  \begin{array}{cc}
  BC(BC)^D&0\\
  0&CB(CB)^D
  \end{array}
\right).$$ Since $BC$ and $CB$ are EP, we see that $((BC)(BC)^{\#})^*=(BC)(BC)^{\#}$ and $((CB)(CB)^{\#})^*=(CB)(CB)^{\#}$.
This implies that $(QQ^{\#})^*=QQ^{\#}$, and so $Q$ has core inverse by Theorem 1.1.

It is easy to verify that
$$PQ=\left(
  \begin{array}{cc}
   0 & AB \\
    DC & 0
  \end{array}
\right)=\lambda\left(
  \begin{array}{cc}
  0 & BD \\
    CA & 0
  \end{array}
\right)=\lambda QP.$$

Moreover, we compute
$$\begin{array}{rll}
PP^{\#}Q&=&\left(
  \begin{array}{cc}
    AA^{\#} & 0 \\
    0 & DD^{\#}
  \end{array}
\right)\left(
  \begin{array}{cc}
   0 & B \\
   C & 0
  \end{array}
\right)\\
&=&\left(
  \begin{array}{cc}
   0 & 0 \\
   DD^{\#}C & 0
  \end{array}
\right),\\
QQ^{\#}P&=&\left(
  \begin{array}{cc}
  BC(BC)^{\#}&0\\
  0&CB(CB)^{\#}
  \end{array}
\right)\left(
  \begin{array}{cc}
    A & 0 \\
    0 & D
  \end{array}
\right)\\
&=&\left(
  \begin{array}{cc}
     BC(BC)^{\#}A & 0 \\
    0 & CB(CB)^{\#}D
  \end{array}
\right).
\end{array}$$
Thus, $$\begin{array}{rll}
PP^{\#}Q+QQ^{\#}P&=&\left(
  \begin{array}{cc}
   BC(BC)^{\#}A& 0 \\
   DD^{\#}C & CB(CB)^{\#}D
  \end{array}
\right)\\
&=&\left(
  \begin{array}{cc}
   (I_m-(BC)^{\pi})A& 0 \\
   DD^{\#}C &(I_n-(CB)^{\pi})D
  \end{array}
\right)\\
&=&\left(
  \begin{array}{cc}
   A& 0 \\
   DD^{\#}C&D
  \end{array}
\right).
\end{array}$$ Clearly, $D^{\pi}(DD^{\#}C)=0$. Since $A$ and $D$ have core inverses, by virtue of Lemma 4.1, $\left(
  \begin{array}{cc}
   A&0\\
   DD^{\#}C&D
  \end{array}
\right)$ has core inverse. Therefore $PP^{\#}Q+QQ^{\#}P$ has core inverse. By using Lemma 4.2, $M=P+Q$ has core inverse, hence the result.
\end{proof}

\begin{cor} Let $A,D$ be EP and $BC$ be invertible. If $AB=\lambda BD$ and $DC=\lambda CA$ for some $\lambda \in {\Bbb C}^*$, then $M$ has core inverse.\end{cor}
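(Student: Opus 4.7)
The plan is to apply Theorem 4.3, whose seven hypotheses I will verify using the assumption that $BC$ is invertible and the two intertwining relations $AB=\lambda BD$ and $DC=\lambda CA$.

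First, the easy verifications: $A$ and $D$ are EP by hypothesis, and $BC$ being invertible is in particular EP. Since $BC$ is invertible, its spectral projection $(BC)^{\pi}$ equals zero, so the conditions $(BC)^{\pi}A=0$ and $C(BC)^{\pi}=0$ of Theorem 4.3 follow trivially.

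Second, I handle the conditions involving $CB$. A direct check shows that when $BC$ is invertible, $(CB)^{\#}=C(BC)^{-2}B$, so that $(CB)(CB)^{\#}=C(BC)^{-1}B$ and $(CB)^{\pi}=I_n-C(BC)^{-1}B$. Then $B(CB)^{\pi}=B-BC(BC)^{-1}B=B-B=0$ is immediate. To verify $(CB)^{\pi}D=0$, I combine the two intertwining relations: right-multiplying $AB=\lambda BD$ by $C$ and substituting $DC=\lambda CA$ gives $ABC=\lambda BDC=\lambda^{2}BCA$, that is, $A(BC)=\lambda^{2}(BC)A$. By the invertibility of $BC$, this is equivalent to $(BC)^{-1}A=\lambda^{2}A(BC)^{-1}$. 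Using this together with $BD=\lambda^{-1}AB$ and $CA=\lambda^{-1}DC$ and rearranging produces $C(BC)^{-1}BD=D$, whence $(CB)^{\pi}D=D-C(BC)^{-1}BD=0$.

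With all the hypotheses of Theorem 4.3 verified, the conclusion that $M\in R^{\tiny\textcircled{\#}}$ follows. I expect the main obstacle to be the scalar-bookkeeping in the derivation of $C(BC)^{-1}BD=D$, where one has to track the factor $\lambda$ and its powers carefully across multiple products involving $A, B, C, D$ and $(BC)^{-1}$; once those manipulations are executed, the remaining checks reduce to a single line of algebra.
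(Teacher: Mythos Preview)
Your overall strategy---verify the seven hypotheses of Theorem~4.3---matches the paper's, but the executions diverge. The paper does not compute $(CB)^{\pi}$ at all: it simply argues that $BC$ invertible forces $CB$ to be invertible, so $(CB)^{\pi}=0$ and the conditions $B(CB)^{\pi}=0$, $(CB)^{\pi}D=0$ are automatic. (That step tacitly uses $m=n$; when $m<n$ the matrix $CB$ has rank at most $m<n$ and is never invertible.) You instead keep $(CB)^{\pi}=I_n-C(BC)^{-1}B$ and try to check the two conditions by hand.

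Your verification of $B(CB)^{\pi}=0$ is fine, but the step ``rearranging produces $C(BC)^{-1}BD=D$'' is wrong. Carrying out your own substitutions gives
\[
C(BC)^{-1}BD
=\lambda^{-1}C(BC)^{-1}AB
=\lambda^{-1}\cdot\lambda^{2}\,CA(BC)^{-1}B
=\lambda\cdot\lambda^{-1}\,DC(BC)^{-1}B
=D\,C(BC)^{-1}B,
\]
so the intertwining relations yield only that $D$ \emph{commutes} with the projection $C(BC)^{-1}B$; they do not give $C(BC)^{-1}BD=D$. In fact $(CB)^{\pi}D=0$ can fail under the stated hypotheses: take $m=1$, $n=2$, $\lambda=1$, $A=1$, $B=(1\ 0)$, $C=(1\ 0)^{T}$, $D=I_2$. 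Then $BC=1$ is invertible, $AB=BD$, $DC=CA$, and $A,D$ are EP, yet $(CB)^{\pi}=\operatorname{diag}(0,1)$ and $(CB)^{\pi}D=\operatorname{diag}(0,1)\neq0$.

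Thus your route to Theorem~4.3 has a genuine gap at $(CB)^{\pi}D=0$. If you are willing to assume the blocks are square (so that $BC$ invertible really does give $CB$ invertible), the paper's one-line reduction is the cleaner argument; without that assumption, the hypothesis $(CB)^{\pi}D=0$ of Theorem~4.3 need not hold, and a different justification is required.
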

\begin{proof} Since $BC$ be invertible, then so is $CB$. Hence, $(BC)^{\pi}=0$ and $(CB)^{\pi}=0$. This completes the proof by
Theorem 4.3.\end{proof}

\vskip10mm

\end{document}